\documentclass[leqno]{amsart}
\usepackage{adjustbox}
\usepackage{amsmath, amssymb, amsthm}
   \swapnumbers
   \makeatletter
     \def\swappedhead@plain#1#2#3{%
       \thmnumber{(\textup{#2})}
       \thmname{\@ifnotempty{#2}{~}\textup{#1}}
       \thmnote{ {\textup{(#3)}}}}
     \let\swappedhead\swappedhead@plain
   \makeatother
   \flushbottom
   \allowdisplaybreaks
   \theoremstyle{plain}
   \newtheorem{theo}[equation]{Theorem}
   \newtheorem{prop}[equation]{Proposition}
   \newtheorem{lemm}[equation]{Lemma}
   \newtheorem{coro}[equation]{Corollary}
   \theoremstyle{definition}
   \newtheorem{rema}[equation]{Remark}
   \newtheorem{scho}[equation]{Scholium}
	\newtheoremstyle{emptystyle}%
	  {1em plus .2em minus .1em}
	  {1em plus .2em minus .1em}
	  {\upshape}
	  {0pt}
	  {\bfseries}
	  {}
	  {.5em}
	  {\thmname{#1}\thmnumber{ #2}\textnormal{\thmnote{ #3.}}}
   \theoremstyle{emptystyle}
	\newtheorem{exam}[equation]{}
	\numberwithin{equation}{section}
 
\usepackage{bm}
\usepackage[inline]{enumitem}
	\setlist{topsep=1.5ex,label=\textup{(\alph*)},ref=\alph*}
\usepackage{epigraph}
\usepackage[mathscr]{eucal}
\usepackage[T1]{fontenc}               
\usepackage[utf8]{inputenc}            
   
   \let\dotlessi\i 
   
   \let\polishl\l
   \let\norwegiano\o
   
   \let\russianbreve\u
\usepackage{needspace}
\usepackage[new]{old-arrows}
   \renewcommand{\to}{\varto}
   \renewcommand{\mapsto}{\varmapsto}
   \renewcommand{\hookrightarrow}{\varhookrightarrow}

\usepackage{soulutf8}
\usepackage[dvipsnames]{xcolor}        
\usepackage{tikz-cd}
\usepackage[final,%
            bookmarksnumbered,%
            colorlinks=true,%
            linkcolor=RedViolet,%
            citecolor=RedViolet,%
            urlcolor=RoyalBlue]{hyperref}   

  \DeclareMathSymbol{A}{\mathalpha}{operators}{`A}
  \DeclareMathSymbol{B}{\mathalpha}{operators}{`B}
  \DeclareMathSymbol{C}{\mathalpha}{operators}{`C}
  \DeclareMathSymbol{D}{\mathalpha}{operators}{`D}
  \DeclareMathSymbol{E}{\mathalpha}{operators}{`E}
  \DeclareMathSymbol{F}{\mathalpha}{operators}{`F}
  \DeclareMathSymbol{G}{\mathalpha}{operators}{`G}
  \DeclareMathSymbol{H}{\mathalpha}{operators}{`H}
  \DeclareMathSymbol{I}{\mathalpha}{operators}{`I}
  \DeclareMathSymbol{J}{\mathalpha}{operators}{`J}
  \DeclareMathSymbol{K}{\mathalpha}{operators}{`K}
  \DeclareMathSymbol{L}{\mathalpha}{operators}{`L}
  \DeclareMathSymbol{M}{\mathalpha}{operators}{`M}
  \DeclareMathSymbol{N}{\mathalpha}{operators}{`N}
  \DeclareMathSymbol{O}{\mathalpha}{operators}{`O}
  \DeclareMathSymbol{P}{\mathalpha}{operators}{`P}
  \DeclareMathSymbol{Q}{\mathalpha}{operators}{`Q}
  \DeclareMathSymbol{R}{\mathalpha}{operators}{`R}
  \DeclareMathSymbol{S}{\mathalpha}{operators}{`S}
  \DeclareMathSymbol{T}{\mathalpha}{operators}{`T}
  \DeclareMathSymbol{U}{\mathalpha}{operators}{`U}
  \DeclareMathSymbol{V}{\mathalpha}{operators}{`V}
  \DeclareMathSymbol{W}{\mathalpha}{operators}{`W}
  \DeclareMathSymbol{X}{\mathalpha}{operators}{`X}
  \DeclareMathSymbol{Y}{\mathalpha}{operators}{`Y}
  \DeclareMathSymbol{Z}{\mathalpha}{operators}{`Z}

  \newcommand\NN{{\mathbf N}}        
  \newcommand\QQ{{\mathbf Q}}        
  \newcommand\RR{{\mathbf R}}        
  \newcommand\ZZ{{\mathbf Z}}        
  \newcommand\piG{\Gamma}            
  \newcommand\piH{\Delta}            
  \newcommand\piX{A}                 
  
  \newcommand\Lie{\mathfrak}
  \newcommand\LG{{\Lie{g}}}
  \newcommand\LH{{\Lie{h}}}

\DeclareMathOperator\ad{ad}
\DeclareMathOperator\Ad{Ad}
\DeclareMathOperator\GL{GL}
\DeclareMathOperator\id{id}
\DeclareMathOperator\INV{inv}
\DeclareMathOperator\Ker{Ker}
\DeclareMathOperator\SO{SO}
  \newcommand\BigWedge%
  {\mathord{\protect\adjustbox{valign=B,totalheight=.6\baselineskip}
  {$\bigwedge$}}}
  \newcommand\dCE{\textup{d}}
  \newcommand\e[1]{{\mathrm e^{\hspace{.06em}#1}}}			
\renewcommand\i{{\mathrm i}}										
  \newcommand\inv{^{-1}}											
\renewcommand\o{^\textup{o}}										

  \newcommand\<{\langle}											
\renewcommand\>{\rangle}											
  \newcommand\twoldots{\mathinner{\ldotp \ldotp}}

\theoremstyle{definition}
\newtheorem{defs}[equation]{Definitions \cite{Souriau:1985a,Iglesias-Zemmour:2013}}
\newtheorem{exes}[equation]{Examples \cite{Souriau:1985a,Iglesias-Zemmour:2013}}

\begin{document}

\title[Cohomology of a Lie group modulo a dense subgroup]{The de Rham cohomology of a Lie group\\ modulo a dense subgroup}

\author{Brant Clark}
\address{Department of Mathematical Sciences, Georgia Southern University, Statesboro, GA 30460, USA}
\curraddr{Department of Mathematics, University of Georgia, Athens, GA 30602, USA}
\email{brant.clark@uga.edu}

\author{François Ziegler}
\address{Department of Mathematical Sciences, Georgia Southern University, Statesboro, GA 30460, USA}
\email{fziegler@georgiasouthern.edu}

\date{December 9, 2025}
\subjclass[2020]{58A12, 57T15, 17B56, 58A40, 22E15}

\begin{abstract}
Let $H$ be a dense subgroup of a Lie group $G$ with Lie algebra $\LG$. We show that the (diffeological) de Rham cohomology of $G/H$ equals the Lie algebra cohomology of $\LG/\LH$, where $\LH$ is the ideal \mbox{$\{Z\in\LG:\exp(tZ)\in H \text{ for all } t\in\RR\}$}.
\end{abstract}

\maketitle

\setlength\epigraphwidth{.5\textwidth}
\renewcommand{\epigraphsize}{\footnotesize}
\setlength\epigraphrule{0pt}
\epigraph{\hfill\so{C'est un problème d'algèbre}.}{---É.~Cartan \cite[p.\,392]{Cartan:1937d}}

\setcounter{section}{-1}
\section{Introduction}

When $H$ is a non-closed subgroup of a Lie group $G$, the ingrained habit of giving subsets the subset topology and quotients the quotient topology is rather sterile. Indeed in that case $H$'s subset topology is not a Lie group topology, and~$G/H$'s~quotient topology is not Hausdorff: it is even trivial when (and only when) $H$ is dense. (See the Appendix, \eqref{closed_subgroup}, for references to these and other facts.) The subgroup situation was completely clarified by Bourbaki \eqref{initial_subgroup_theorem}: \emph{any} subgroup $H$ is canonically a Lie group, with possibly-finer-than-subset topology and Lie algebra
\begin{equation}
	\label{subalgebra}
	\LH = \{Z\in\LG:\exp(tZ)\in H \text{ for all } t\in\RR\}.
\end{equation}
The subtler case of quotients has given rise to several approaches. A leading one \cite{Connes:1985,Loday:1986} is to seek ``non-commutative topology'' \cite{Effros:1979} in the (periodic cyclic) cohomology of a crossed product algebra, used in place of the deficient (e.g.~trivial) commutative algebra of smooth functions on $G/H$.

In this paper we explore concurrent ideas of Souriau \emph{et al.}~\cite{Souriau:1985a,Donato:1985a,Donato:1987}, who embedded \{manifolds\} into the larger category \{diffeological spaces\} which has 1$^\circ$) arbitrary subobjects and quotient objects, yet still 2$^\circ$) on each, a de Rham complex $(\Omega^\bullet(X),d)$ and resulting cohomology $H_{\textup{dR}}^\bullet(X)$. We find that these are very simple:

\begin{theo}
	\label{main_theorem}
	Let $H$ be a dense subgroup of a Lie group $G$. Then \eqref{subalgebra} is an~ideal in $\LG$\textup, and we have an isomorphism $(\Omega^\bullet(G/H),d)=(\BigWedge{}^\bullet(\LG/\LH)^*,\dCE)$ and hence
	\begin{equation}
		\label{isomorphism}
		H_{\textup{dR}}^\bullet(G/H)=H^\bullet(\LG/\LH),
	\end{equation}
	where the right-hand sides are the Chevalley--Eilenberg complex of $\LG/\LH$ and its coho\-mology \cite[§14]{Chevalley:1948}.
\end{theo}

Note that this is doubly unusual: first, manifolds never have \mbox{$0<\dim\Omega^\bullet(X)<\infty$}; secondly, known $H^\bullet(\LG/\LH)$ will yield nonzero $H_{\textup{dR}}^\bullet(G/H)$ \emph{despite} the trivial topology. Towards explicit examples, it helps to specialize \eqref{main_theorem} to the extreme cases where $H$ is either \emph{D-connected} or \emph{D-discrete}, where for reasons to appear in (\ref{Diffeology_definitions}\ref{D-topology}, \ref{Diffeology_examples}\ref{manifold_diffeology}), `D-' means the Lie group topology mentioned before \eqref{subalgebra}; equivalently, see \eqref{Yamabe}, these two cases correspond to the subset topology of $H$ being either arcwise connected or totally arcwise disconnected (i.e.~arc components are points). We will obtain:

\begin{coro}
	\label{corollary}
	If the dense subgroup $H\subset G$ is \textup{(a)} D-connected\textup, \textup{(b)} D-discrete\textup, or \textup{(c)} a D-discrete additive subgroup $\piX$ in a vector space $V$\textup, then we have respectively
	\begin{enumerate}[itemsep=1.5ex,itemindent=-1pt]

		\item
		\label{D-connected}
		$H_{\textup{dR}}^\bullet(G/H)=\BigWedge{}^\bullet(\LG/\LH)^*$
		\textup(full exterior algebra\textup{);}

		\item
		\label{D-discrete}
		$H_{\textup{dR}}^\bullet(G/H)=H^\bullet(\LG)$
		\textup(and all Lie algebra cohomology rings occur in this~way\textup{);}

		\item
		\label{vector}
		$H_{\textup{dR}}^\bullet(V/\piX)=\BigWedge{}^\bullet V^*$
		\textup(full exterior algebra\textup).
	\end{enumerate}
	Moreover the resemblance of \eqref{D-connected} to \eqref{vector} is no accident\textup, for $G/H$ in \eqref{D-connected} can always~be rewritten as a \emph{quasitorus} \textup(\cite{Prato:2001,Iglesias-Zemmour:2021,Karshon:2025}\textup) $V/\piX$ as in \eqref{vector}\textup, with $V=\LG/\LH$.
\end{coro}

The classic example of \eqref{D-connected} is the $2$-torus $G=\smash[b]{\left(\begin{smallmatrix} S^{\smash1}&\hspace{-1pt}0^{\smash{\phantom1}}
\\
0^{\phantom1}&\hspace{-1pt}S^1
\end{smallmatrix}\hspace{-3pt}\right)}$ modulo an \mbox{irrational winding}
\begin{equation}
	\label{Kronecker}
	H=
	\left\{
	\begin{pmatrix}
		\e{2\pi\i t}&0\\
		0&\e{2\pi\i\alpha t}
	\end{pmatrix}:t\in \RR
	\right\}
	\rlap{\qquad\qquad$(\alpha\notin\QQ)$.}
\end{equation}
Here each coset of $H$ meets the transversal $\dot G=\smash[b]{\left(\begin{smallmatrix} 1&0^{\smash{\phantom1}}\\0&S^1 \end{smallmatrix}\!\right)}$ in a coset of $\dot H=\smash[b]{\left(\begin{smallmatrix} 1&0\\0&\e{2\pi\i\alpha\ZZ} \end{smallmatrix}\right)}$, so the rewriting is $G/H=\dot G/\dot H=\RR/(\ZZ+\alpha\ZZ)$, a \emph{quasicircle}, which (\ref{D-connected},\,\ref{D-discrete},\,\ref{vector}) all~agree has de Rham cohomology $\BigWedge^\bullet\RR=\RR\oplus\RR$, the same as a circle. This was observed in \cite[2.4]{Donato:1987}; later (\ref{corollary}\ref{vector}) appeared in \cite[Ex.\,105]{Iglesias-Zemmour:2013}, and \eqref{main_theorem} for $1$-forms in \cite[\nolinebreak 9.14]{Barbieri:2026}. Lastly, upon announcing \eqref{main_theorem} itself we learned from H.~Kihara that he independently obtained it in his forthcoming \cite{Kihara:2026}.

If we allow non-dense but \emph{closed} subgroups, then equalities like \eqref{isomorphism} have of course a much longer history, told in \cite[pp.\,84--85, 152--153]{Borel:2001a} and going back to Cartan's papers \cite{Cartan:1928,Cartan:1930a} which inspired both de Rham and Chevalley--Eilenberg. Ultimately his results (for $H=\{e\}$ or $G/H$ symmetric) were generalized into the following: \emph{whenever $G$ is compact connected and $H$ closed connected\textup, one has}
\begin{equation}
	\label{relative}
	H_{\textup{dR}}^\bullet(G/H)=H^\bullet(\LG,\LH)
\end{equation}
\cite[§22]{Chevalley:1948}. Here the right-hand side is relative Lie algebra cohomology, of which we will only need to know that it boils down to $H^\bullet(\LG/\LH)$ when $\LH$ is an ideal, as in \eqref{main_theorem}. So a common generalization of \eqref{main_theorem} and \eqref{relative} to arbitrary subgroups seems next in order. We do not achieve it here, however,
as both reduce matters to $G$-invariant forms (and thereby algebra) 
too differently: in \eqref{main_theorem} it happens at the cochain level and by \emph{density}; in \eqref{relative} it happens only in cohomology and by \emph{averaging}, which essentially requires $G$ compact.

Our plan below is to review the diffeological setting in §§\ref{vocabulary}--\ref{complex}, then prove \eqref{main_theorem} and \eqref{corollary} in §§\ref{forms}--\ref{proofs}, and add examples in  §\ref{examples}. Appendix \ref{subgroups} collects known subgroup properties with which we didn't wish to clutter the exposition, and Appendix \ref{proof_CE} contributes a quick proof of the Chevalley--Eilenberg coboundary formula \eqref{Chevalley-Eilenberg}.

\section{Diffeological vocabulary}\label{vocabulary}

Suppose $X$ is a manifold, and write $\tau_m$ for the Euclidean topology of $\RR^m$. Then $\mathscr P := \bigcup_{m\in\NN,\,U\in\tau_m}\!C^\infty(U,X)$ satisfies the following, where `is a plot' means `$\in\mathscr P\,$':

\Needspace*{5\baselineskip}
\begin{enumerate}[label=\textup{(D\arabic*)},ref=D\arabic*]

	\item
	\label{D1}\emph{Covering.}
	All constant maps $\RR^m\to X$ are plots, for all $m$.

	\item
	\label{D2}\emph{Locality.}
	Let $U\overset{P}{\to}X$ be a map with $U\in\tau_m$. If every point of $U$ has an open neighborhood $V$ such that $P_{|V}$ is a plot, then $P$ is a plot.

	\item
	\label{D3}\emph{Smooth compatibility.} 
	Let $U\overset{\Phi}{\to}V\overset{Q\,\,}{\to}X$ be maps with $(U,V)\in\allowbreak\tau_m\times\tau_n$. If~$Q$ is a plot and $\Phi\in C^\infty(U,V)$, then $Q\circ\Phi$ is a plot.
\end{enumerate}
Diffeology \cite{Souriau:1985a,Iglesias-Zemmour:2013} is a generalization of manifold theory, where instead of declaring which maps $U\to X$ are diffeomorphisms (`charts'), one declares which are merely smooth (`plots'), subject to (\ref{D1}--\ref{D3}) as axioms --- of which interestingly, (\ref{D1}--\ref{D2}) were already in \cite[1.19\emph{b\textup,e}]{Souriau:1958}. In more detail, writing $\operatorname{Maps}(U,X)=X^U$:

\begin{defs} \ 
	\label{Diffeology_definitions}
	\begin{enumerate}

		\item
		Let $X$ be a set. A \emph{diffeology} on $X$ is a subset $\mathscr P$ of \,$\bigcup_{m\in\NN,\,U\in\tau_m}\!\operatorname{Maps}(U,X)$ satisfying (\ref{D1}--\ref{D3}); members of $\mathscr P$ are called \emph{plots}.

		\item
		\label{smooth_map}
		A map $(X,\mathscr P)\overset{F}{\to} (Y,\mathscr Q)$ between diffeological spaces (:~sets with diffeologies) is called \emph{smooth} if $P\in\mathscr P$ implies $F\circ P\in\mathscr Q$.

		\item
		\label{D-topology}
		\vphantom{$\overset{F}{\to}$}%
		A subset of a diffeological space is \emph{D-open}, and a member of the \emph{D-topology}, if its preimage by every plot is Euclidean open.

		\item
		If $(X,\mathscr P)\overset{\id}{\to}(X,\mathscr Q)$ is smooth, i.e.~$\mathscr P\subset\mathscr Q$, we call $\mathscr P$ \emph{finer} and $\mathscr Q$ \emph{coarser}.
	\end{enumerate}
With this, diffeological spaces and smooth maps make a category; (\ref{D-topology}) defines a functor from it to topological spaces and continuous maps; and every diffeology $\mathscr P$ sits between  $\mathscr P_{\text{discrete}} =$ \{locally constant maps\} and $\mathscr P_{\text{coarse}}=$ \{all maps\}.
\end{defs}

\begin{exes} \
	\label{Diffeology_examples}
	\begin{enumerate}

		\item
		\label{manifold_diffeology}
		What we said before (\ref{D1}--\ref{D3}) endows every manifold with a canonical \emph{manifold diffeology}. We say that a diffeological space \emph{is a manifold} if it can be so obtained; then (\ref{Diffeology_definitions}\ref{smooth_map},\,\ref{D-topology}) boil down to the ordinary notions.

		\item
		\label{subset_diffeology}
		Let $Y$ be a diffeological space and $i:X\to Y$ an injection. Then $X$ has a coarsest diffeology making $i$ smooth, the \emph{subset diffeology}, characterized by: $F:Z\to X$ (from another diffeological space) is smooth iff $i\circ F$ is smooth. Its plots are the maps $P:U\to X$ such that $i\circ P$ is a plot of $Y$:
		\begin{equation*}
			\hspace{1cm}
			\begin{tikzcd}
				& Y
				\\
				  U
				  \ar[r,"P"]
				  \ar[ur,"i\circ P"]
				& X
				  \ar[u,hook,"i"']
				& Z\rlap{.}
				  \ar[l,"F"']
				  \ar[ul,"i\circ F"']
			\end{tikzcd}
		\end{equation*}

		\item
		\label{quotient_diffeology}
		Let $X$ be a diffeological space and $s:X\to Y$ a surjection. Then $Y$ has a finest diffeology making $s$ smooth, the \emph{quotient diffeology}, characterized~by: $F:Y\to Z$ (to another diffeological space) is smooth iff $F\circ s$ is smooth. Its~plots are the maps $Q:V\to Y$ that have around each $v\in V$ a `local lift': a plot $P:U\to X$ with $U\subset V$ an open neighborhood of $v$ and $s\circ P=Q_{|U}$:
		\begin{equation*}
			\hspace{-1.2cm}
			\begin{tikzcd}
				&
				& X
				  \ar[d,"s"]
				  \ar[dr,"F\circ s"]
				\\
				  v\in U
				  \ar[r,dashed,hook]
				  \ar[urr,dashed,"P"]
				& V
				  \ar[r,"Q"]
				& Y
				  \ar[r,"F"]
				& Z\rlap{.}
			\end{tikzcd}
		\end{equation*}

		\item
		\label{caution}
		Caution: while quotient diffeologies have D-topology = quotient topology, subset diffeologies generally have D-topology $\supset$ subset topology.
	\end{enumerate}
\end{exes}

\section{Diffeological de Rham complex}\label{complex}

Let us agree to call \emph{ordinary} the $k$-forms on Euclidean open sets $V\subset\RR^n$ or manifolds $X$, and operations on them (exterior derivative $d$, pull-back $\Phi^*$). Thus an ordinary $k$-form $\omega\in\Omega^k(V)$ is a smooth map $v\mapsto\omega_v$ from $V$ to the space $\smash{\BigWedge^k(\RR^n)^*\cong\RR^{\binom n k}}$ of alternating $k$-linear maps $\RR^n\times\dots\times\RR^n\to\RR$ ($k$ factors), and its ordinary exterior derivative and pull-back by $\Phi\in C^\infty(U,V)$ are given by
\begin{equation}
	\label{ordinary_d}
	(d\omega)_v(v_0,\dots,v_k)
	=\sum_{i=0}^k(-1)^i\frac{\partial\omega_v}{\partial v}(v_i)(v_0,\dots,\widehat{v_i},\dots,v_k)
\end{equation}
(hat means `omit') and $(\Phi^*\omega)_u(u_1,\dots,u_k) = \omega_{\Phi(u)}(D\Phi(u)(u_1),\dots,D\Phi(u)(u_k))$.

\begin{defs}
	\label{De_Rham_definitions}
	Let $X$ and $Y$ be diffeological spaces.
	\begin{enumerate}

		\item
		\label{k-form}
		A (diffeological) \emph{$k$-form on} $Y$ is a functional $\beta$ which sends each plot $Q:V\to Y$ to an ordinary $k$-form on $V$, \emph{denoted} $Q^\star\beta$ (note special $\star$). As compatibility, we require: if $\Phi\in C^\infty(U,V)$ (so $Q\circ\Phi$ is another plot), then
		\begin{equation*}
			(Q\circ\Phi)^\star\beta = \Phi^*Q^\star\beta,
			\qquad
			\Phi^* : \text{ordinary pull-back}.
		\end{equation*}

		\item
		\label{pull-back}
		Its \emph{pull-back} by a smooth map $F:X\to Y$ is the $k$-form $F^*\beta$ on $X$ defined by: if $P$ is a plot of $X$ (so $F\circ P$ is a plot of $Y$), then
		\begin{equation*}
			P^\star F^*\beta = (F\circ P)^\star\beta,
			\qquad
			F^* : \text{being defined}.\hspace{2em}
		\end{equation*}

		\item
		\label{exterior_derivative}
		Its \emph{exterior derivative} is the $(k+1)$-form $d\beta$ on $Y$ defined by: if $Q$ is a plot of $Y$, then $Q^\star d\beta = d [Q^\star\beta]$, with ordinary $d$ on the right-hand side.
	\end{enumerate}
	The \emph{de Rham complex} $(\Omega^\bullet(Y),d)$ is the sum of the spaces $\Omega^k(Y)$ of $k$-forms on $Y$, endowed with the differential (\ref{exterior_derivative}), which satisfies $d^{\mspace{1mu}2}=0$ because \eqref{ordinary_d} does. Its cohomology is the \emph{de Rham cohomology} $H_{\text{dR}}^\bullet(Y)$.
\end{defs}

\begin{scho}
	One checks without trouble that (\ref{k-form},\,\ref{pull-back},\,\ref{exterior_derivative}) above imply the following, which hold true for all $k$-forms $\beta$ and smooth maps $F, G$ \cite{Souriau:1985a,Iglesias-Zemmour:2013}:
	\begin{equation}
		\label{pull_back_commutes}
		(F\circ G)^*\beta = G^*F^*\beta,
		\qquad\qquad
		d[F^*\beta] = F^*d\beta.
	\end{equation}
	If $Y$ is an Euclidean open set, each diffeological $k$-form $\beta$ on $Y$ defines an ordinary one, $b=\id_Y^\star\beta$, and (\ref{k-form}) (applied with $\id_Y,Q$ in place of $Q,\Phi$) forces $Q^\star\beta$ to always equal the ordinary pull-back $Q^*b$. Likewise if $Y$ is a manifold (\ref{Diffeology_examples}\ref{manifold_diffeology}), then (\ref{k-form}) (applied at first to \emph{charts} $V\to Y$) ensures that there is an ordinary $k$-form $b$ (:~section of $\smash{\BigWedge^kT^*Y}$) such that $Q^\star\beta$ and $Q^\star d\beta$ are always just the ordinary $Q^*b$ and $Q^*db$. So on manifolds we may (and will) suppress the distinction between diffeological and ordinary $k$-forms and operations on them; hence we retire the special $\star$, and (\ref{De_Rham_definitions}\ref{k-form},\,\ref{pull-back},\,\ref{exterior_derivative}) become special cases of \eqref{pull_back_commutes}.
\end{scho}

One often needs to decide if a given $k$-form is pulled back from a quotient. For this we have the following criterion, proved in \cite[2.5]{Souriau:1985a} or \cite[6.38--39]{Iglesias-Zemmour:2013}:

\begin{prop}
	\label{pullback_by_subduction}
	Let $s:X\to Y$ be a subduction between diffeological spaces\textup, i.e.~a smooth surjection such that $Y$ has precisely the quotient diffeology \textup{(\ref{Diffeology_examples}\ref{quotient_diffeology})}. Let $\alpha\in\Omega^k(X)$. Then $\alpha=s^*\beta$ for some $\beta\in\Omega^k(Y)$ iff all plots $P, Q$ of $X$ satisfy\textup:
	\begin{equation}
		\label{criterion}
		s\circ P=s\circ Q
		\qquad\Rightarrow\qquad
		P^*\alpha = Q^*\alpha.
	\end{equation}
	Moreover $\beta$ is then unique\textup, i.e.\textup, pull-back $s^*:\Omega^k(Y)\to\Omega^k(X)$ is injective.\qed
\end{prop}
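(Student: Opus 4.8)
The plan is to prove the two implications separately. The ``only if'' direction is a one-line computation with the functoriality \eqref{pull_back_commutes}: if $\alpha=s^*\beta$ and two plots satisfy $s\circ P=s\circ Q$, then $P^*\alpha=P^*s^*\beta=(s\circ P)^*\beta=(s\circ Q)^*\beta=Q^*s^*\beta=Q^*\alpha$, which is \eqref{criterion}. The same computation also pins $\beta$ down, giving uniqueness: since $Y$ carries the quotient diffeology (\ref{Diffeology_examples}\ref{quotient_diffeology}), every plot $R$ of $Y$ admits, near each point of its domain, a local lift --- a plot $P$ of $X$ with $s\circ P=R$ on their common open domain --- and there $R^\star\beta$ is forced to agree with $P^*(s^*\beta)=P^*\alpha$; so $\beta$ is determined on every plot by $\alpha$ alone, i.e.\ $s^*$ is injective.

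For the ``if'' direction I would promote that last observation to a definition. Given a plot $R:V\to Y$, I cover $V$ by the domains $U$ of the local lifts $P:U\to X$ of $R$ furnished by (\ref{Diffeology_examples}\ref{quotient_diffeology}), and set $R^\star\beta:=P^*\alpha$ on each such $U$. The step I expect to be the only real obstacle is \emph{well-definedness}: given two such lifts $P:U\to X$ and $P':U'\to X$, restricting both to $U\cap U'$ yields plots of $X$ with $s\circ(P|_{U\cap U'})=R|_{U\cap U'}=s\circ(P'|_{U\cap U'})$, so the hypothesis \eqref{criterion} forces $P^*\alpha=P'^*\alpha$ on $U\cap U'$. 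Since ordinary $k$-forms on an open subset of $\RR^n$ glue uniquely from compatible pieces on an open cover, the local forms $P^*\alpha$ then patch to a single ordinary $k$-form $R^\star\beta$ on $V$, smooth because it is locally so.

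It remains to verify that $R\mapsto R^\star\beta$ is an honest diffeological $k$-form --- i.e.\ satisfies the compatibility of (\ref{De_Rham_definitions}\ref{k-form}) --- and that $s^*\beta=\alpha$. For compatibility: given $\phi\in C^\infty(W,V)$ and $w\in W$, choose a local lift $P:U\to X$ of $R$ near $\phi(w)$; by the smooth-compatibility axiom (D3), the restriction of $P\circ\phi$ to $\phi^{-1}(U)$ is a plot of $X$ lifting $R\circ\phi$ there, so near $w$ one gets $(R\circ\phi)^\star\beta=(P\circ\phi)^*\alpha=\phi^*(P^*\alpha)=\phi^*(R^\star\beta)$, hence this holds on all of $W$. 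For $s^*\beta=\alpha$: for any plot $P:U\to X$, the map $s\circ P$ is a plot of $Y$ of which $P$ is itself a global lift, so $(s\circ P)^\star\beta=P^*\alpha$ on all of $U$ by construction, whence $P^*(s^*\beta)=(s\circ P)^*\beta=P^*\alpha$; since a form is determined by its values on all plots, $s^*\beta=\alpha$ follows. Apart from the well-definedness point, all of this is bookkeeping with (D1--D3), the definitions in \ref{De_Rham_definitions}, and the sheaf property of ordinary forms.
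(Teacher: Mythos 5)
Your proposal is correct and is essentially the standard argument that the paper itself only cites (from \cite[2.5]{Souriau:1985a} and \cite[6.38--39]{Iglesias-Zemmour:2013}): define $R^\star\beta$ on each plot $R$ of $Y$ via local lifts, use \eqref{criterion} for well-definedness and the sheaf property of ordinary forms to glue, then verify the compatibility of (\ref{De_Rham_definitions}\ref{k-form}) and $s^*\beta=\alpha$. The well-definedness step you flag is indeed the crux, and your handling of it (restricting two lifts to the common domain so that \eqref{criterion} applies) is exactly right.
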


\section{Differential forms on $G/H$ ($H$ dense)}\label{forms}

We now assume our theorem's hypotheses: $G$ is a Lie group, $H$ a dense subgroup (i.e.~$H$ meets every open subset of $G$). Endow $X=G/H$ with the quotient diffeology, and write $\Pi:G\to X$ for the natural projection, $\Pi(q)=qH$. Also write $L_g$ and $R_g:G\to G$ for left and right translation by $g\in G$: $L_g(q) = gq$ and $R_g(q) = qg$.

\begin{prop}
	\label{pull_backs_to_G}
	Pull-back via $\Pi$ defines a bijection $\Pi^*$ from $\Omega^k(X)$ onto the set of those $\mu\in\Omega^k(G)$ that are
	\begin{enumerate}[topsep=1ex,itemsep=1ex]

		\item
		\label{right-invariant}\emph{right-invariant:} 
		$R_g^*\mu=\mu$ for all $g\in G$\textup;

		\item
		\label{horizontal}\emph{$\LH$-horizontal:}
		$\mu(Z_1,\dots,Z_k)=0$ whenever one of the $Z_j\in\LG$ is in $\LH$.
	\end{enumerate}
\end{prop}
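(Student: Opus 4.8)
The plan is to apply Proposition~\ref{pullback_by_subduction} to the subduction $\Pi:G\to X$, so that the image of $\Pi^*$ consists exactly of the $\mu\in\Omega^k(G)$ satisfying the criterion \eqref{criterion}: $\Pi\circ P=\Pi\circ Q$ implies $P^*\mu=Q^*\mu$ for all plots $P,Q$ of $G$. Injectivity of $\Pi^*$ is then free from that proposition, so the whole task reduces to showing that \eqref{criterion} is equivalent to the conjunction of (\ref{right-invariant}) and (\ref{horizontal}). The key geometric input is that $\Pi\circ P=\Pi\circ Q$ means $P$ and $Q$ land in the same $H$-coset pointwise, i.e.\ $Q(u)=P(u)h(u)$ for some function $h:U\to H$; and the density of $H$ in $G$ is what will let me move between ``for all $h\in H$'' and ``for all $g\in G$'' (resp.\ ``for all $Z\in\LH$'').

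For the easy direction, suppose $\mu$ is right-invariant and $\LH$-horizontal, and $\Pi\circ P=\Pi\circ Q$. First I would treat the case where the coset function factors through a \emph{constant} $h\in H$, i.e.\ $Q=R_h\circ P$: then $Q^*\mu=P^*R_h^*\mu=P^*\mu$ by (\ref{right-invariant}). The general case has $h:U\to H$ non-constant, and here is where horizontality enters: writing $Q(u)=P(u)h(u)$ and differentiating, $DQ(u)$ differs from $D(R_{h(u)}P)(u)$ by a term tangent to the $H$-orbit direction, i.e.\ by (the appropriate translate of) a vector in the span of $\{$velocity vectors $\exp'(0)(Z):\exp(tZ)\in H\}$. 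The Lie algebra $\LH$ as defined is precisely $\{Z:\exp(tZ)\in H\ \forall t\}$, so those orbit-direction vectors are exactly right translates of $\LH$; since $\mu$ kills any multivector with a factor in $\LH$, the extra terms drop out and $Q^*\mu=P^*\mu$ as desired. I would phrase this cleanly by noting $D(R_{h(u)})^{-1}(DQ(u)-DR_{h(u)}P(u))\in\LH$ componentwise and expanding $\mu$ multilinearly.

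For the converse, assume $\mu$ satisfies \eqref{criterion}. To get (\ref{right-invariant}): fix $g\in G$; I want $R_g^*\mu=\mu$, equivalently $P^*R_g^*\mu=P^*\mu$ for every plot $P$, equivalently $(R_g\circ P)^*\mu=P^*\mu$. When $g=h\in H$ this is immediate from \eqref{criterion} with $Q=R_h\circ P$, since $\Pi\circ(R_h\circ P)=\Pi\circ P$. For general $g$, I use density: $R_g^*\mu$ and $\mu$ are both \emph{continuous} (smooth) in the relevant sense, and $g\mapsto (R_g\circ P)^*\mu$ is a smooth $\Omega^k$-valued map agreeing with the constant $P^*\mu$ on the dense set $H$, hence everywhere --- concretely, evaluating on a fixed point and tangent vectors gives a continuous function of $g$ that is constant on $H$ dense, so constant. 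To get (\ref{horizontal}): take $Z\in\LH$ and consider the plot $P:\RR\times(\text{chart})\to G$, $(t,u)\mapsto c(u)\exp(tZ)$ for an arbitrary plot $c$; since $\exp(tZ)\in H$, we have $\Pi\circ P(t,u)=\Pi\circ c(u)=\Pi\circ P(0,u)$, so \eqref{criterion} (comparing $P$ with $(t,u)\mapsto c(u)$) forces $P^*\mu$ to be independent of $t$ and to have no $dt$ component; the $dt$-component at $t=0$ is, up to sign, $\mu$ contracted with the right-invariant vector field generated by $Z$, which is thus $0$. Running $c$ over all plots and all choices of the remaining arguments shows $\mu(Z_1,\dots,Z_k)=0$ whenever some $Z_j$ is a right-translate of an element of $\LH$, which by right-invariance (already proved) is the stated condition.

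The main obstacle I anticipate is the first (``easy'') direction with a \emph{non-constant} coset function $h:U\to H$: one must legitimately differentiate the relation $Q(u)=P(u)h(u)$ despite $h$ taking values in a possibly totally disconnected dense subgroup, and argue that the contribution of $Dh$ to $DQ(u)$ lies in the right-translated $\LH$ rather than merely in some larger "tangent to $H$" space that need not be controlled. The clean way around this is to avoid differentiating $h$ directly: instead observe that locally on $U$ one can write $Q=L_{?}\cdots$ --- more precisely, work infinitesimally by noting that the criterion \eqref{criterion} applied to the two plots $(s,u)\mapsto P(u)$ and $(s,u)\mapsto Q(u)$ compared along paths, together with the already-established invariance and horizontality on the ``constant $h$'' and ``one-parameter'' pieces, suffices; alternatively one reduces to these two special cases by a standard argument that any plot $P$ with $\Pi\circ P$ fixed is, near each point, obtained from a single reference plot by right translation by \emph{locally constant} $H$-valued maps composed with flows of $\LH$-fields, which is exactly the content of the quotient diffeology (\ref{Diffeology_examples}\ref{quotient_diffeology}) local-lift description. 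I would make sure the write-up isolates this local structure lemma first, since both directions lean on it.
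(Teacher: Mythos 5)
Your overall skeleton is the same as the paper's (reduce everything to the criterion of \eqref{pullback_by_subduction}), and your direction ``criterion $\Rightarrow$ (\ref{right-invariant}),(\ref{horizontal})'' is sound: constant right translations by $h\in H$ plus density give (\ref{right-invariant}), and comparing $(t,u)\mapsto c(u)\exp(tZ)$ with $(t,u)\mapsto c(u)$ at a point where $c(u)=e$ gives (\ref{horizontal}), which is essentially the paper's pair of plots $\exp(Z)$, $\exp(Z)\exp(W)$. The genuine gap is in the direction (\ref{right-invariant})$\wedge$(\ref{horizontal}) $\Rightarrow$ criterion, exactly at the obstacle you flag: you must differentiate the coset map $u\mapsto h(u)=P(u)\inv Q(u)$ and know that its derivative lies in (translates of) $\LH$. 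Your proposed fix attributes the needed local structure to the quotient-diffeology local-lift description (\ref{Diffeology_examples}\ref{quotient_diffeology}), but that description only says plots of $X$ lift locally to plots of $G$; it says nothing about how two lifts of the same plot differ, and it does not yield your claimed decomposition into ``locally constant $H$-valued maps composed with flows of $\LH$-fields''. The fact that closes the gap is a nontrivial theorem about subgroups of Lie groups: $H$ with its subset diffeology \emph{is a manifold}, indeed a Lie group whose Lie algebra is exactly $\LH=\{Z:\exp(tZ)\in H\ \forall t\}$ (Bourbaki III.4.5, Hilgert--Neeb 9.6.13). Since $u\mapsto P(u)\inv Q(u)$ is a plot of $G$ with values in $H$, it is then an ordinary smooth map into that Lie group, so $\delta h\in T_hH$ and $\delta h.h\inv\in\LH$; without citing (or reproving) this theorem your argument does not get off the ground.

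A second, smaller omission: even granted that $\delta h.h\inv\in\LH$, the extra term in $\delta[gh].(gh)\inv$ is $g\,\delta h.h\inv g\inv$, which lies in $g\LH g\inv$, not in $\LH$ as your ``right translates of $\LH$'' phrasing suggests (the orbit-direction vectors are left translates). Since (\ref{horizontal}) is a condition on $\mu$ at $e$ only, you must know $g\LH g\inv=\LH$ for all $g\in G$, i.e.\ \eqref{G_normalizes_h}; this is where the density of $H$ enters this direction (the normalizer of $\LH$ is closed and contains $H$), and your sketch never invokes it. With these two inputs made explicit, your computation becomes the paper's.
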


\begin{proof}
	Let us first note that since $H$ is dense, we know from (\ref{vanEst}\ref{invariant}) that
	\begin{equation}
		\label{G_normalizes_h}
		\text{$G$ normalizes $\LH$:\qquad}
		\Ad(g)(\LH) = \LH
		\quad\text{for all }
		g\in G.
	\end{equation}
	Suppose $\mu=\Pi^*\alpha$ for some $\alpha\in\Omega^k(X)$. We must prove (\ref{right-invariant}) and (\ref{horizontal}). Now the relation $\Pi\circ R_h=\Pi$ implies $R_h^*\Pi^*\alpha=\Pi^*\alpha$ for all $h\in H$ \eqref{pull_back_commutes}, and since $H$ is dense, the same follows for all $g\in G$: so $\mu$ is right-invariant. To see that it is $\LH$-horizontal, consider the two plots $P, Q:\LG\times\LH\to G$ sending $u=(Z,W)$ to
	\begin{equation}
		P(u)=\exp(Z),
		\qquad\text{resp.}\qquad
		Q(u)=\exp(Z)\exp(W).
	\end{equation}
	(For these to be literally plots, use bases to identify $U:=\LG\times\LH$ with some $\RR^m$.) Then clearly $\Pi\circ P=\Pi\circ Q$, so by the criterion \eqref{pullback_by_subduction} we have $P^*\mu=Q^*\mu$. As $DP(0,0)$ and $DQ(0,0)$ map $(Z,W)$ respectively to $Z$ and $Z+W$, this implies $\mu(Z_1,\dots,Z_k)=\mu(Z_1+W_1,\dots,Z_k+W_k)$ for all choices of $(Z_i,W_i)\in T_{(0,0)}U$. If $Z_j\in\LH$, then choosing $W_j=-Z_j$ yields (\ref{horizontal}).
	
	Conversely, suppose that $\mu\in\Omega^k(G)$ satisfies (\ref{right-invariant}) and (\ref{horizontal}), and let $P,Q:U\to G$ be any two plots with $\Pi\circ P=\Pi\circ Q$. We must show that $P^*\mu = Q^*\mu$. Now $\Pi\circ P=\Pi\circ Q$ means that $R(u):= P(u)\inv Q(u)$ defines a plot $R:U\to H$. Thus we have an ordinary smooth map $P\times Q\times R$ sending $u\in U$ to
	\begin{equation}
		(g,gh,h):=(P(u), Q(u), R(u)).
	\end{equation}
	Its derivative at $u$ will send each $\delta u\in T_uU$ to a tangent vector we choose to denote $(\delta g,\delta[gh], \delta h)\in T_gG\times T_{gh}G\times T_hH$. Now following \cite[III.2.2]{Bourbaki:1972}, write simply $g.v$ and $v.g$ for the images of a vector $v\in T_qG$ under the derivatives $DL_g(q)$ and $DR_g(q)$. Then we have $\Ad(g)(Z) = g.Z.g\inv$ and $\delta[gh] = \delta g.h + g.\delta h$, whence, given $k$ tangent vectors $\delta_1u,\dots,\delta_ku\in T_uU$,
	\begin{equation}
		\label{little_computation}
		\begin{aligned}
			\delta_i[gh].(gh)\inv
			&=[\delta_ig.h + g.\delta_ih].(gh)\inv\\
			&=\delta_ig.g\inv + \Ad(g)(\delta_ih.h\inv).
		\end{aligned}
	\end{equation}
	By \eqref{G_normalizes_h}, the second term here (call it $W_i$) is in $\LH$. Thus we obtain
	\begin{align}
		(Q^*\mu)(\delta_1u,\dots,\delta_ku)
		&=\mu(\delta_1[gh],\dots,\delta_k[gh])\notag\\
		&=\mu(\delta_1[gh].(gh)\inv,\dots,\delta_k[gh].(gh)\inv)
		&&\text{by (\ref{right-invariant})}\notag\\
		&=\mu(\delta_1g.g\inv + W_1,\dots,\delta_kg.g\inv + W_k)
		&&\text{by \eqref{little_computation}}\notag\\
		&=\mu(\delta_1g.g\inv,\dots,\delta_kg.g\inv)
		&&\text{by (\ref{horizontal})}\\
		&=\mu(\delta_1g,\dots,\delta_kg)
		&&\text{by (\ref{right-invariant})}\notag\\
		&=(P^*\mu)(\delta_1u,\dots,\delta_ku)&&\notag
	\end{align}
	as desired. So \eqref{pullback_by_subduction} says that $\mu$ is in the image of the injection $\Pi^*$, and the proof is complete.
\end{proof}

\section{End of proofs}\label{proofs}
Lie algebra cohomology is traditionally defined (or motivated) using \emph{left-} rather than right-invariant forms on $G$. To switch between the two, we need only pull back by the inversion map $\INV:g\mapsto g\inv$. Indeed the relation $\INV\circ\,L_g = R_{g\inv}\circ\INV$ readily implies that  $\mu\in\Omega^k(G)$ is right-invariant iff $\omega=\INV^*\mu$ is left-invariant. Also $\INV^*$ preserves $\LH$-horizontality, because the derivative of $g\mapsto g\inv$ at $e$ is $Z\mapsto -Z$. So \eqref{pull_backs_to_G} gives:

\begin{coro}
	\label{inverted_pull_backs}
	In the setting of \eqref{pull_backs_to_G}\textup, pull-back via $\check\Pi=\Pi\circ\INV$ defines a bijection $\check\Pi^*=\INV^*\Pi^*$ from $\Omega^k(X)$ onto the set of those $\omega\in\Omega^k(G)$ that are
	\begin{enumerate}[topsep=1ex,itemsep=1ex]
		
		\item
		\label{left-invariant}\emph{left-invariant:} 
		$L_g^*\omega=\omega$ for all $g\in G$\textup;
		
		\item
		\label{h-horizontal}\emph{$\LH$-horizontal:}
		$\omega(Z_1,\dots,Z_k)=0$ whenever one of the $Z_j\in\LG$ is in $\LH$.\qed
		
	\end{enumerate}
\end{coro}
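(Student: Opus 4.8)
The plan is to deduce this directly from Proposition~\ref{pull_backs_to_G} by transporting its conclusion along the inversion map $\INV$, exactly as the remarks just before the corollary suggest.

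First I would note that $\INV:G\to G$ is a diffeomorphism — it is smooth, being the group inversion of a Lie group, and it equals its own inverse — so by \eqref{pull_back_commutes} the pull-back $\INV^*:\Omega^k(G)\to\Omega^k(G)$ is a bijection. Since $\check\Pi=\Pi\circ\INV$ gives $\check\Pi^*=\INV^*\Pi^*$ (again by \eqref{pull_back_commutes}), Proposition~\ref{pull_backs_to_G} then shows that $\check\Pi^*$ is a bijection from $\Omega^k(X)$ onto the image under $\INV^*$ of the set of right-invariant, $\LH$-horizontal $k$-forms on $G$. So the only task left is to check that $\INV^*$ carries that set bijectively onto the set of left-invariant, $\LH$-horizontal forms.

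For the invariance part I would use the identity $\INV\circ L_g=R_{g\inv}\circ\INV$: for $\omega=\INV^*\mu$ it yields $L_g^*\omega=(\INV\circ L_g)^*\mu=(R_{g\inv}\circ\INV)^*\mu=\INV^*(R_{g\inv}^*\mu)$, which equals $\INV^*\mu=\omega$ exactly when $\mu$ is right-invariant; the converse holds by the same computation since $\INV$ is an involution. For horizontality, I would observe that a right-invariant $\mu$ is $\LH$-horizontal precisely when $\mu_e$ annihilates any $k$-tuple of vectors in $\LG$ one of which lies in $\LH$; since $\INV$ fixes $e$ and $D\INV(e)=-\id$ on $\LG$, one gets $\omega_e(Z_1,\dots,Z_k)=(-1)^k\mu_e(Z_1,\dots,Z_k)$, so this pointwise condition at $e$ transfers verbatim (up to the harmless overall sign) between $\mu$ and $\omega$, and for the left-invariant $\omega$ it is exactly condition~(\ref{h-horizontal}) of the corollary.

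I do not expect a genuine obstacle here: the argument is a purely formal transport of structure along the diffeomorphism $\INV$. The only points deserving a word of care are that $\INV^*$ really is a bijection on \emph{diffeological} forms (immediate from $\INV$ being a diffeomorphism) and that $\LH$-horizontality of an invariant form is a single pointwise condition at $e$ that interacts trivially with the sign change $D\INV(e)=-\id$ — both already implicit in the paragraph preceding the statement.
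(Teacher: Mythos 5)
Your argument is correct and is essentially the paper's own: the corollary is obtained by transporting Proposition~\ref{pull_backs_to_G} along the diffeomorphism $\INV$, using $\INV\circ L_g=R_{g\inv}\circ\INV$ to exchange right- and left-invariance and $D\INV(e)=-\id$ to see that $\LH$-horizontality (a condition at $e$ only) is preserved up to the harmless sign $(-1)^k$. Your extra remarks on $\INV^*$ being a bijection of diffeological forms and on $\check\Pi^*=\INV^*\Pi^*$ via \eqref{pull_back_commutes} just make explicit what the paper leaves implicit.
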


Now, left-invariant forms (\ref{inverted_pull_backs}\ref{left-invariant}) make a subcomplex $(\Omega^\bullet(G)^G,d)$ of $(\Omega^\bullet(G),d)$ which depends only on $\LG$: for they satisfy, for all $Z_i\in\LG$, the relation (notation \ref{little_computation}) $\omega(g.Z_1,\dots,g.Z_k) = \omega(Z_1,\dots,Z_k)$ which characterizes $\omega$ by its value $\omega_e\in\smash{\BigWedge{}^k\LG^*}$, and the \emph{Chevalley--Eilenberg formula}
\begin{equation}
	\label{Chevalley-Eilenberg}
	d\omega(Z_0,\dots,Z_k) = \!\!\sum_{0\leqslant i<j\leqslant k}\!\!
	(-1)^{i+j}\omega([Z_i,Z_j],Z_0,\twoldots,\widehat Z_i,\twoldots,\widehat Z_j,\twoldots,Z_k)
\end{equation}
which computes $(d\omega)_e$ from $\omega_e$ alone. Thus, using \eqref{Chevalley-Eilenberg} as definition of a coboundary $\dCE$ on $\BigWedge{}^\bullet\LG^*$, we obtain a complex $(\BigWedge{}^\bullet\LG^*,\dCE)$ isomorphic to $(\Omega^\bullet(G)^G,d)$ via $\omega\mapsto\omega_e$. Its cohomology is by definition the \emph{Lie algebra cohomology} $H^\bullet(\LG)$. (For all this see for instance \cite[III.3.14]{Bourbaki:1972} or \cite[14.14\,\emph{sq}]{Michor:2008}, in addition to \cite[§9, §14]{Chevalley:1948} whose normalizations differed slightly.)

Our interest, however, lies in the further subcomplex $\smash{\Omega^\bullet(G)^G_\LH}$ of forms that satisfy also (\ref{inverted_pull_backs}\ref{h-horizontal}); or equivalently via $\omega\mapsto\omega_e$, its isomorph $\smash{(\BigWedge{}^\bullet\LG^*)_\LH}$ defined by (\ref{inverted_pull_backs}\ref{h-horizontal}) inside $\BigWedge{}^\bullet\LG^*$. For this we have the following, where $\pi:\LG\to\LG/\LH$ is the natural projection.

\begin{lemm}
	\label{induced_morphism_of_complexes}
	We have $[\LG,\LH]\subset\LH$\textup, i.e.~$\LH$ is an ideal\textup, and pull-back via $\pi$ defines an isomorphism $\pi^*$ of $(\BigWedge{}^\bullet(\LG/\LH)^*,\dCE)$ onto the subcomplex $((\BigWedge{}^\bullet\LG^*)_\LH,\dCE)$ of $(\BigWedge{}^\bullet\LG^*,\dCE)$.
\end{lemm}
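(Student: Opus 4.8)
The plan is to separate the statement into a piece of linear algebra and a one-line bracket computation, with the ideal property \eqref{G_normalizes_h} entering only in the latter. The linear-algebra piece: extended to exterior powers, $\pi^*$ is an isomorphism of the graded vector space $\BigWedge^\bullet(\LG/\LH)^*$ onto the $\LH$-horizontal subspace $(\BigWedge^\bullet\LG^*)_\LH$. The bracket piece: $\pi^*$ carries the Chevalley-Eilenberg differential of $\LG/\LH$ to that of $\LG$ \eqref{Chevalley-Eilenberg}. Granting both, the lemma drops out: the second, combined with surjectivity of $\pi^*$ onto $(\BigWedge^\bullet\LG^*)_\LH$ from the first, shows at once that $d$ maps $(\BigWedge^\bullet\LG^*)_\LH$ into itself --- confirming it is genuinely a subcomplex --- and that $\pi^*$ is an isomorphism of complexes. (That the source $(\BigWedge^\bullet(\LG/\LH)^*,d)$ is a complex at all is automatic, $\LG/\LH$ being a Lie algebra since $\LH$ is an ideal.)

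For the linear algebra I would argue directly, without choosing coordinates. Since $\pi:\LG\to\LG/\LH$ is linear and onto: (i)~for $\bar\omega\in\BigWedge^k(\LG/\LH)^*$ the form $(\pi^*\bar\omega)(Z_1,\dots,Z_k)=\bar\omega(\pi Z_1,\dots,\pi Z_k)$ is $\LH$-horizontal, because $Z_j\in\LH$ forces $\pi Z_j=0$; (ii)~$\pi^*$ is injective, as $\pi$ is surjective; and (iii)~given an $\LH$-horizontal $\omega\in\BigWedge^k\LG^*$ and $\bar Z_1,\dots,\bar Z_k\in\LG/\LH$, the number $\omega(Z_1,\dots,Z_k)$ computed from any lifts $Z_i$ (i.e.\ $\pi Z_i=\bar Z_i$) is independent of the lifts --- replacing $Z_i$ by $Z_i+W$ with $W\in\LH$ adds $\omega(\dots,W,\dots)=0$ --- so it defines $\bar\omega\in\BigWedge^k(\LG/\LH)^*$ with $\pi^*\bar\omega=\omega$. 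Hence $\pi^*$ is a bijection onto $(\BigWedge^k\LG^*)_\LH$. (One could instead pick a linear splitting $\LG=\LH\oplus V$ and an adapted basis, which displays $(\BigWedge^k\LG^*)_\LH$ as $\BigWedge^kV^*$ and $V^*$ as $(\LG/\LH)^*$ via $\pi^*$; the intrinsic argument seems cleaner.)

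For the bracket piece, recall that $\LH$ being an ideal \eqref{G_normalizes_h} says exactly that the bracket of $\LG$ descends to $\LG/\LH$, i.e.\ $\pi[Z,Z']=[\pi Z,\pi Z']$ for all $Z,Z'\in\LG$. Plugging $\pi Z_0,\dots,\pi Z_k$ into the Chevalley-Eilenberg formula \eqref{Chevalley-Eilenberg} for $d\bar\omega$ and rewriting each $[\pi Z_i,\pi Z_j]$ as $\pi[Z_i,Z_j]$, the right-hand side becomes term-by-term the Chevalley-Eilenberg formula for $d(\pi^*\bar\omega)$ evaluated at $(Z_0,\dots,Z_k)$; thus $\pi^*\,d=d\,\pi^*$, as required.

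I expect no serious obstacle here --- as the epigraph warns, \emph{c'est un problème d'algèbre}. The only step calling for a second's care is part (iii) of the linear algebra, the well-definedness of the descended form, and that is immediate from multilinearity and horizontality. The bracket computation is pure bookkeeping with \eqref{Chevalley-Eilenberg}, with \eqref{G_normalizes_h} supplying precisely the fact that makes the first argument of each summand descend.
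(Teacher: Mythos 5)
Your proof is correct and follows essentially the same route as the paper: injectivity of $\pi^*$ from surjectivity of $\pi$, horizontality of $\pi^*\bar\omega$, surjectivity onto $(\BigWedge^\bullet\LG^*)_\LH$ by descending a horizontal form along arbitrary lifts (well-defined by horizontality), and compatibility with $d$ via \eqref{Chevalley-Eilenberg} together with $\pi([Z_i,Z_j])=[\pi(Z_i),\pi(Z_j)]$. Your write-up merely spells out slightly more explicitly (e.g.\ the observation that $(\BigWedge^\bullet\LG^*)_\LH$ is indeed a subcomplex) what the paper's sketch leaves implicit.
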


\begin{proof}
	Deriving \eqref{G_normalizes_h} at $e$ gives $[\LG,\LH]\subset\LH$. The rest is functoriality of $\BigWedge{}^\bullet(\,\cdot\,)^*$ and essentially the end remark of \cite[\nolinebreak §22]{Chevalley:1948}; we sketch the elementary argument. Pull-back $\pi^*: \smash{\BigWedge^k(\LG/\LH)^*} \to \smash{\BigWedge^k\LG^*}$ is defined by $(\pi^*\sigma)(Z_1,\dots,Z_k) = \sigma(\pi(Z_1),\dots,\pi(Z_k))$. It is one-to-one because $\pi$ is onto, and clearly $\pi^*\sigma$ is always $\LH$-horizontal. Conversely if $\omega$ is $\LH$-horizontal, it is $\pi^*\sigma$ with $\sigma(A_1,\dots,A_k):=\omega(Z_1,\dots,Z_k)$ where $Z_i$ is any member of $\pi\inv(A_i)$: (\ref{inverted_pull_backs}\ref{h-horizontal}) ensures that this is well-defined. So we get a linear bijection $\BigWedge{}^\bullet(\LG/\LH)^* \to (\BigWedge{}^\bullet\LG^*)_\LH$, which commutes with $\dCE$ because of \eqref{Chevalley-Eilenberg} and $\pi([Z_i,Z_j]) = [\pi(Z_i),\pi(Z_j)]$.
\end{proof}

\begin{proof}[Proof of \eqref{main_theorem}]
	The theorem now follows by composing the three isomorphisms of complexes seen in (\ref{inverted_pull_backs}, \ref{Chevalley-Eilenberg}, \ref{induced_morphism_of_complexes}):
	\begin{equation}
		\begin{tikzcd}[row sep=large,column sep=large,every label/.append style={font=\small}]
			\Omega^\bullet(G)^G_{\smash{\LH}}
			\ar[rr,"\omega\mapsto\omega_e"]
			&&
			(\BigWedge{}^\bullet\LG^*)_\LH
			\\
			\Omega^\bullet(X)
			\ar[u,"\check\Pi^*"]
			\ar[rr,dashed] 
			&&
			\BigWedge{}^\bullet(\LG/\LH)^*\rlap{.}
			\ar[u,swap,"\pi^*"]
		\end{tikzcd} 
	\end{equation} 
	Of these the first uses, of course, the commutativity \eqref{pull_back_commutes} of $d$ with $\check\Pi^*$.
\end{proof}

\begin{proof}[Proof of \textup{(\ref{corollary})}]
	\eqref{D-connected} If $H$ is D-connected, we know that $\LG/\LH$ is \emph{abelian} (\ref{vanEst}\ref{abelian}). So all coboundaries in $\BigWedge{}^\bullet(\LG/\LH)^*$ vanish \eqref{Chevalley-Eilenberg}, and  \eqref{isomorphism} is this full exterior algebra.

	\eqref{D-discrete} If $H$ is  D-discrete, we have $\LH=\{0\}$, so  \eqref{isomorphism} says that $H_{\textup{dR}}^\bullet(G/H)=H^\bullet(\LG)$. Every Lie algebra cohomology ring $H^\bullet(\LG)$ occurs in this way, for given $\LG$ we can find a connected Lie group $G$ with Lie algebra $\LG$, and then in $G$ always a countable dense subgroups $H$ \cite[4.2]{Gelander:2017}, which is D-discrete by \cite[Ex.\,8]{Iglesias-Zemmour:2013}.

\eqref{vector} If $G/H$ is the quotient $V/\piX$ of a vector space by a D-discrete dense additive subgroup, then again \eqref{Chevalley-Eilenberg} is zero and (\ref{corollary}\ref{D-discrete}) is the full exterior algebra $\BigWedge{}^\bullet V^*$.

	Finally, to see how case \eqref{D-connected} always boils down to \eqref{vector}, we build the following commutative diagram, starting with the third row:
	\begin{equation}
		\label{nine_lemma}
		\begin{tikzcd}
			&
			1\ar[d] &
			1\ar[d] &
			1\ar[d,dashed]
			\\
			1\ar[r] &
			\piH\ar[r]\ar[d] &
			\piG\ar[r]\ar[d] &
			\piX\ar[r]\ar[d,dashed] &
			1
			\\
			1\ar[r] & 
			\widetilde H\ar[r]\ar[d] & 
			\widetilde G\ar[r]\ar[d] & 
			V\ar[r]\ar[d,dashed] & 
			1
			\\
			1\ar[r] & 
			H\ar[r]\ar[d] & 
			G\ar[r]\ar[d] & 
			X\ar[r]\ar[d,dashed] & 
			1
			\\
			&1&1&1\rlap.
		\end{tikzcd}
	\end{equation}
	That row defines $X$ as the diffeological quotient $G/H$, where we note that $H$ is normal by \eqref{G_normalizes_h}, and $G$ is connected as closure of $H$ (which is D-connected, hence also connected in the subset topology, as the inclusion $H\hookrightarrow G$ is smooth, hence D-continuous). Next we let $\smash{\widetilde G}:=$ universal covering of $G$, $\smash{\widetilde H}:=$ its integral subgroup with Lie algebra $\LH$, and $V:=\smash{\widetilde G/\widetilde H}$. Then \eqref{Chevalley-Malcev-Iwasawa} says that $\smash{\widetilde H}$ is closed, and $\smash{\widetilde H}$ and $V$ are simply connected. In particular $V$ equals $\LG/\LH$, as the unique simply connected Lie group with that abelian Lie algebra. Next define $\piG:=\Ker(\smash{\widetilde G}\to G)$, $\piH:=\piG\cap\smash{\widetilde H}$, and $\piX := \piG/\piH$. These are discrete in every sense, and the five short exact sequences with solid arrows $\begin{tikzcd}[cramped,sep=scriptsize]{}\ar[r]&{}\end{tikzcd}$ are by construction \emph{D-exact}, i.e., the subgroup and quotient in each have the subset and quotient diffeology \eqref{Diffeology_examples}. Therefore the Nine Lemma of \cite[1.30]{Souriau:1985a} says that the diagram has a unique commutative completion by a sixth \emph{D-exact} sequence $\begin{tikzcd}[cramped,sep=scriptsize]{}\ar[r,dashed]&{}\end{tikzcd}$: in other words, $X$ is also the diffeological quotient $V/\piX$, as claimed. Moreover $H$ dense is equivalent to $\piX$ dense, as both are separately equivalent to $X$ having trivial quotient topology \eqref{closed_subgroup}, and the countability of $\piX$, technically required for a quasitorus \cite{Prato:2001,Iglesias-Zemmour:2021,Karshon:2025}, holds here because $G$ is connected, so its fundamental group $\piG$ \eqref{nine_lemma} is countable by \cite[14.2.10(iv)]{Hilgert:2012a}. 
\end{proof}

\section{Examples}\label{examples}

\begin{exam}
	As a simple instance of (\ref{corollary}\ref{D-discrete}), we can realize $H^\bullet(\mathfrak{so}_3) = \RR\oplus\{0\}\oplus\{0\}\oplus\RR$ as de Rham cohomology of $\SO_3(\RR)/\SO_3(\QQ)$; or we could replace the denominator here by a dense free group on two generators \cite{Tomkowicz:2016}; or do the same in any connected semisimple Lie group \cite{Kuranishi:1951}, always getting $H^\bullet(\LG)$.
\end{exam}
	
\begin{exam}
	In another direction, if $\LG$ is a nilpotent Lie algebra and $G$ a corresponding connected Lie group, then (\ref{corollary}\ref{D-discrete}) holds also for some \emph{uncountable} dense D-discrete $H\subset G$, viz.~the subgroups of Hausdorff dimension $0<d<1$ built in \cite[1.1]{Saxce:2013}; their uncountability and D-discreteness follow from \cite[§2.2]{Falconer:2003}.
\end{exam}

\begin{exam}
	Likewise, (\ref{corollary}\ref{vector}) is of interest already for $V=\RR$, as \emph{all} subgroups $\piX\subsetneq\RR$ are D-discrete (see \cite[p.\,364]{Salzmann:2007} or \cite[Ex.\,124]{Iglesias-Zemmour:2013}) and all except the $a\ZZ$ are dense. Using a Hamel basis, one can prove existence of $\smash{2^{2^{\aleph_0}}}$ different subgroups \cite[p.\,8]{Salzmann:2007}, which however defy classification beyond the ``torsion-free rank 1'' (isomorphic to subgroups of $\QQ$): see \cite[pp.\,331--335]{Rotman:1995} and \cite{Paolini:2024}. No matter, (\ref{corollary}\ref{vector}) says that all of them except $\{0\}$ and $\RR$ will give $H_{\textup{dR}}^\bullet(\RR/\piX) = \BigWedge{}^\bullet \RR = \RR\oplus\RR$.
\end{exam}

\begin{exam}
	Remarkably, when $\piX=\ZZ+\alpha\ZZ$ the above does \emph{not} match the periodic cyclic cohomology found in \cite[Thm 53]{Connes:1985} or \cite[§4.8]{Loday:1986} for a crossed product algebra attached to the quasicircle $X=\RR/\piX$. Instead they find $\BigWedge^\bullet\RR^2 = \RR\oplus\RR^2\oplus\RR$, which happens to match the diffeological \emph{Čech} cohomology $\check H^\bullet(X,\RR)$ that \cite{Iglesias-Zemmour:2024a} defined in general and computed for all $V/\piX$ in (\ref{corollary}\ref{vector}). His result is
	\begin{equation}
		\label{Cech}
		\check H^\bullet(V/\piX,\RR) = H^\bullet(\piX,\RR),
	\end{equation}
	the real cohomology of the abstract group $\piX$: in other words, $V/\piX$ behaves like a diffeological $K(\piX,1)$ space. For $\piX=\ZZ+\alpha\ZZ\cong\ZZ^2$ one gets virtually by definition \cite[§II.4, Ex.\,4]{Brown:1982} the real cohomology of a 2-torus, i.e.~the same $\BigWedge^\bullet\RR^2$ as above. (More generally \cite[V.6.4(ii)]{Brown:1982} computes \eqref{Cech}, for all torsion-free abelian $\piX$, as $\operatorname{Hom}_\ZZ(\BigWedge^\bullet \piX,\RR)$ where $\BigWedge^\bullet \piX$ is the exterior algebra of $\piX$ viewed as a $\ZZ$-module.)
\end{exam}

\begin{exam}
	Needless to say, \eqref{main_theorem} admits more examples where $H$ is neither D-connected nor D-discrete. Perhaps the simplest obtains if we replace the subgroup \eqref{Kronecker} by
	\begin{equation}
		\label{Kronecker++}
		H=
		\left\{
		\begin{pmatrix}
			\e{2\pi\i t}&0\\
			0&\pm\e{2\pi\i\alpha t}
		\end{pmatrix}:t\in \RR
		\right\}
		= H^+\sqcup H^-
	\end{equation}
	which has two D-components, yet is connected in the 2-torus because each D-com\-po\-nent is already dense (see e.g.~\cite[9.6]{James:1987}). We note that existence of this \emph{connected yet not arcwise connected subgroup} answers the question left open at the end of \cite[§6.14]{Godement:1982}. Of course the added D-component changes neither the Lie algebra~$\LH$ (nonzero, so $H$ isn't D-discrete) nor $\LG/\LH$, so \eqref{isomorphism} still gives $H_{\textup{dR}}^\bullet(G/H) = \BigWedge^\bullet\RR$. (More generally we could replace $\pm$ by any proper, hence D-discrete, subgroup $\Sigma\subsetneq S^1$ of the unit circle, and still get $\BigWedge^\bullet\RR$.)
\end{exam}	

\appendix
\section{Some subgroup properties}\label{subgroups}

Throughout this appendix, $G$ is a Lie group (always real, finite-dimensional) and $H$ an arbitrary subgroup. We collect five properties which are known, but perhaps not so well known as to be cited without explanation.

\begin{prop}
	\label{initial_subgroup_theorem}
	$H$ always admits a unique manifold structure such that \textup{1$^\circ$)} $i:H\hookrightarrow G$ is an immersion and \textup{2$^\circ$)} a map $F$ from any manifold to $H$ is $C^\infty$ iff $i\circ F$ is $C^\infty$. With this structure\textup, $H$ is a Lie group with Lie algebra \eqref{subalgebra}.
\end{prop}

\begin{proof}[References\textup:]
	This is \cite[III.4.5, Prop.\,9]{Bourbaki:1972} and, to our knowledge, exposed in only three other places: \cite[§6.14]{Godement:1982}, \cite[§§2.2--2.3]{Rossmann:2002} for matrix groups, and \cite[§9.6.2]{Hilgert:2012a} for the exact $C^\infty$ version we quote. In Bourbaki, finding \eqref{subalgebra} requires reading also III.6.4 (Corollary 2), III.6.2 (Example 2), and III.1.1 (Corollary).
\end{proof}

In the language of \eqref{Diffeology_examples}, this says: $(H,$ subset diffeology) \emph{is a manifold}. As stated before \eqref{corollary}, a prefix `D-' shall qualify everything involving the resulting topology, as opposed to the less useful subset topology:

\begin{prop}
	\label{closed_subgroup}
	The following are equivalent\textup:
	\begin{enumerate}[topsep=.5ex,itemsep=.3ex]

		\item
		\label{Lie}
		the subset topology of $H$ is a Lie group topology\textup; 
		
		\item
		\label{Hausdorff}
		the quotient topology of $G/H$ is Hausdorff\textup;
		
		\item
		\label{closed}
		$H$ is closed in $G$\textup.
	\end{enumerate}
	Moreover\textup, the quotient topology is trivial iff $H$ is dense in $G$.
\end{prop}

\begin{proof}[References\textup:]
	(\ref{Lie}) $\Leftrightarrow$ (\ref{closed}) is the closed subgroup theorem 
	\cite{
	Bourbaki:1972,		
	Godement:1982,    
	Hilgert:2012a}		
	and the fact that $H$'s subset topology is not locally compact unless $H$ is closed (e.g.~\cite[§1.1]{Godement:1982} or \cite[9.3.9]{Hilgert:2012a}); (\ref{Hausdorff}) $\Leftrightarrow$ (\ref{closed}) and the last statement are in e.g.~\cite[3.16 and 6.14]{James:1987}.
\end{proof}

\begin{prop}
	\label{Yamabe}
	The D-connected components of $H$ are its arc components.
\end{prop}

\begin{proof}[References\textup:]
	This is Yamabe's theorem \cite{Yamabe:1950} as applied in \cite[9.6.13]{Hilgert:2012a}. (Again \mbox{`D-'} means connected components in the manifold topology, while arc components (unqualified) are in the subset topology; \eqref{Kronecker++} illustrates the nuance.)
\end{proof}

\begin{prop}
	\label{vanEst}
	If $H$ \textup(resp.~its D-identity component $H\o$\textup) is dense in $G$\textup, then
	\begin{enumerate}[topsep=.5ex,itemsep=.3ex]
		
		\item
		\label{invariant}
		$\LH$ is a $G$-invariant ideal\textup: $\Ad(g)(\LH)=\LH$ for all $g\in G$\textup; resp.
		
		\item
		\label{abelian}
		$\LH$ is a $G$-invariant ideal\textup, and $\LG/\LH$ is abelian.
	\end{enumerate}
\end{prop}

\begin{proof}
	(\ref{invariant}) is from \cite[Lemma 1]{Yosida:1937a}, and nowadays proved simply by observing that the normalizer $N_G(\LH)$ is a closed subgroup (e.g.~\cite[III.9.4]{Bourbaki:1972} or \cite[11.1.1]{Hilgert:2012a})  containing $H$, hence equal to $G$ by density.
	
	(\ref{abelian}) is from \cite[1.4.1]{Est:1951}, also found in \cite[III.9.2, Prop.\,5]{Bourbaki:1972} or \cite[§6.15(16)]{Godement:1982}. However, their larger context obscures the simplicity of this \emph{direct argument}: in
	\begin{equation*}
		\begin{tikzcd}[column sep=scriptsize]
			0\ar[r] &
			\LH\ar[r] &
			\LG\ar[r] &
			\LG/\LH\ar[r] &
			0\rlap,
		\end{tikzcd}
	\end{equation*}
	the $G$-invariance of $\LH$ implies that the adjoint actions ($\Ad$ of $G$, $\ad$ of $\LG$) on $\LG$ induce actions $\underline{\Ad}$ and $\underline{\ad}$ on $\LG/\LH$. Now $[\LH,\LG]\subset\LH$ implies $\underline{\ad}(\LH)=0$, hence $\underline{\Ad}(H\o)=\id$, hence by density $\underline{\Ad}(G)=\id$, hence $\underline{\ad}(\LG)=0$ which means $[\LG/\LH,\LG/\LH]=0$.
\end{proof}

\begin{prop}
	\label{Chevalley-Malcev-Iwasawa}
	If $H$ is normal and D-connected in a simply connected $G$\textup, then
	\begin{enumerate*}
		\item
		\label{normal_closed} 
		$H$ is closed\textup, 
		\item
		\label{simply_connected}
		both $H$ and $G/H$ are simply connected.
	\end{enumerate*}
\end{prop}

\begin{proof}[References\textup:]
	(\ref{normal_closed}) is from \cite[p.\,127]{Chevalley:1946}; (\ref{simply_connected}),~while proved in \cite[11.1.21]{Hilgert:2012a} and attributed to Mal'cev--Iwasawa \cite{Samelson:1952}, is perhaps best understood as consequence of the vanishing first, third and fifth terms in the homotopy exact sequence
	\begin{equation*}
		\begin{tikzcd}[column sep=scriptsize]
			\pi_2(G/H)\ar[r] &
			\pi_1(H)\ar[r] &
			\pi_1(G)\ar[r] &
			\pi_1(G/H)\ar[r] &
			\pi_0(H)\rlap.
		\end{tikzcd}\qedhere
	\end{equation*}
\end{proof}

\section{Proving the Chevalley-Eilenberg formula (\ref{Chevalley-Eilenberg})}
\label{proof_CE}

Samelson \cite{Samelson:1952} described \eqref{Chevalley-Eilenberg} as `one of a series of algebraic coboundary formulae, which recently have become popular'; Feigin and Fuchs \cite{Feigin:2000} call it no less than `cumbersome', `unnatural', and `tedious'. Excepting \cite[pp.\,156--161]{Knapp:1988}, all (ten) proofs we could find 
replace Chevalley and Eilenberg's (an opaque induction) by an appeal to Palais' formula \cite[Lemma 1]{Palais:1954}. This feels heuristically backwards, insofar as \eqref{Chevalley-Eilenberg} informed Palais; so there may perhaps be interest in the following simple proof.

We consider the $\LG$-valued 1-form $\Theta_G(\delta g) = g\inv.\delta g$, where $\delta g\in T_gG$ and notation is as explained before \eqref{little_computation}. Extending \eqref{ordinary_d} and later \eqref{pull_back_commutes} to vector-valued forms in the obvious way, we have first (cf.~\cite{Burkhardt:1913,Cartan:1930a,Bourbaki:1972}):

\begin{lemm}[Maurer--Cartan]
	\label{Maurer-Cartan}
	$d\Theta_G(\delta g,\delta'g)=[\Theta_G(\delta'g),\Theta_G(\delta g)]$.
\end{lemm}
 
\begin{proof}
	1.~Suppose $G=\GL_n(\RR)$. Then we are in an open set in $\RR^{n\times n}$, and $g\inv.\delta g$ is literally a matrix product and $[\cdot,\cdot]$ the commutator. So definition \eqref{ordinary_d} applies and gives
	\begin{equation}
		d\Theta_G(\delta g,\delta'g) =
		\smash[t]{\frac{\partial g\inv}{\partial g}(\delta g).\delta'g} -
		\smash[t]{\frac{\partial g\inv}{\partial g}(\delta'g).\delta g} =
		[g\inv.\delta'g,g\inv.\delta g]
	\end{equation}
	as claimed, where we used the `freshman formula' $(\partial g\inv/\partial g)(\delta g) = -g\inv.\delta g.g\inv$. In fact, since both sides of \eqref{Maurer-Cartan} are left-invariant, it would have been enough (and will suffice in the rest of this proof) to do the calculation at $g=e$ and get
	\begin{equation}
		\label{at_e}
		d\Theta_G(Z_0,Z_1)=[Z_1,Z_0]
		\rlap{\qquad\quad $\forall\,Z_0, Z_1\in\LG$.}
	\end{equation}

	2.~Suppose $R:G\to H$ is a morphism of Lie groups with differential $r:\LG\to\LH$. Then the $\Theta$s are related by $R^*\Theta_H = r\circ\Theta_G$. Therefore $R^*d\Theta_H = r\circ d\Theta_G$, i.e., $d\Theta_H(r(Z_0),r(Z_1)) = r(d\Theta_G(Z_0,Z_1))$. From this one readily deduces that \eqref{at_e} for $H$ implies it for $G$ if $r$ is one-to-one, and conversely if $r$ is onto.
	
	3.~Suppose $G$ is simply connected. By Ado's theorem, we have a morphism $R:G\to\GL_n(\RR)$ with injective differential; so steps 1 and 2 imply \eqref{at_e} for $G$.
	
	4.~Suppose $G$ is arbitrary, and let $\smash{\widetilde G}\to G^{\mathrm o}$ be the universal covering of its identity component. Then the composition $R:\smash{\widetilde G}\to G^{\mathrm o}\hookrightarrow G$ has surjective differential, so steps 2 and 3 imply \eqref{at_e} for $G$. 
\end{proof}

\begin{lemm}
	\label{shuffle_lemma}
	Let $\alpha$ be a $2$-form and $\beta$ a $(k-1)$-form. Then
	\begin{equation*}
		(\alpha\wedge\beta)(Z_0,\dots,Z_k) =
		\!\!\sum_{0\leqslant i<j\leqslant k}\!\!
		(-1)^{i+j-1}\alpha(Z_i,Z_j)
		\beta(Z_0,\twoldots,\widehat{Z}_i,\twoldots,\widehat{Z}_j,\twoldots,Z_k).
	\end{equation*}
\end{lemm}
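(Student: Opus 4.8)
The plan is to read this off the standard shuffle expansion of a wedge product, the only substantive point being a sign count. Recall that, with the conventions underlying \eqref{ordinary_d}, a $p$-form $\alpha$ and a $q$-form $\beta$ satisfy
\[
	(\alpha\wedge\beta)(Z_0,\dots,Z_{p+q-1})=\sum_{\sigma}\mathrm{sgn}(\sigma)\,\alpha(Z_{\sigma(0)},\dots,Z_{\sigma(p-1)})\,\beta(Z_{\sigma(p)},\dots,Z_{\sigma(p+q-1)}),
\]
the sum running over the \emph{$(p,q)$-shuffles}, i.e.\ the permutations $\sigma$ of $\{0,\dots,p+q-1\}$ with $\sigma(0)<\dots<\sigma(p-1)$ and $\sigma(p)<\dots<\sigma(p+q-1)$ (this is the standard formula for $\wedge$; see, e.g., \cite{Michor:2008}). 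I would take this as known.

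First I specialize to $p=2$, $q=k-1$, with arguments indexed $0,\dots,k$. A $(2,k-1)$-shuffle $\sigma$ is completely determined by the pair $(i,j):=(\sigma(0),\sigma(1))$, which ranges exactly over the pairs $0\leqslant i<j\leqslant k$; indeed $\sigma(2),\dots,\sigma(k)$ are then forced to list $\{0,\dots,k\}\setminus\{i,j\}$ in increasing order. So the shuffle sum is already a sum over such pairs, with the factors $\alpha(Z_i,Z_j)$ and $\beta(Z_0,\twoldots,\widehat Z_i,\twoldots,\widehat Z_j,\twoldots,Z_k)$ appearing in the statement. It then remains to check that $\mathrm{sgn}(\sigma)=(-1)^{i+j-1}$; for this I would count the inversions of $\sigma=(i,j,a_2,\dots,a_k)$ (with $a_2<\dots<a_k$): the position pair $(0,1)$ contributes none since $i<j$; each pair $(0,m)$ with $m\geqslant2$ contributes one inversion for each of the $i$ entries of the complement below $i$ (namely $0,\dots,i-1$); each $(1,m)$ contributes one for each of the $j-1$ entries of the complement below $j$ (namely $\{0,\dots,j-1\}\setminus\{i\}$); and the pairs $(m,m')$ with $2\leqslant m<m'$ contribute none. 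Hence $\sigma$ has $i+(j-1)$ inversions, giving the desired sign. (Equivalently: bubbling $Z_i$ to the front costs $(-1)^i$, after which bubbling $Z_j$ to the second slot costs $(-1)^{j-1}$.)

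There is no real obstacle here, only the bookkeeping of that sign. If one prefers to bypass the shuffle formula altogether, one can note that both sides of the asserted identity are linear in $\alpha$, so it suffices to treat a decomposable $\alpha=\theta\wedge\theta'$ with $\theta,\theta'$ ordinary $1$-forms; then the identity follows from associativity of $\wedge$ and the elementary expansion of a product of $1$-forms evaluated on $k+1$ vectors, which is again just the above count.
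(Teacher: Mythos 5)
Your proposal is correct and follows essentially the same route as the paper: expand $\alpha\wedge\beta$ over $(2,k-1)$-shuffles, observe each shuffle is determined by $(i,j)=(\sigma(0),\sigma(1))$, and verify $\mathrm{sgn}(\sigma)=(-1)^{i+j-1}$; your inversion count is just an algebraic rendering of the paper's ``$i+j-1$ crossings'' diagram. Nothing further is needed.
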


\begin{proof}
	The left-hand side is by definition $\sum_\sigma(-1)^\sigma\alpha(Z_{\sigma(0)},Z_{\sigma(1)}) \beta(Z_{\sigma(2)},\dots,Z_{\sigma(k)})$ where the sum is over permutations of $\{0,\dots,k\}$ that are increasing over $\{0,1\}$ and $\{2,\dots,k\}$: see e.g.~\cite[p.\,260]{Bredon:1993}. Such a $\sigma$ is determined by $(i,j)=(\sigma(0),\sigma(1))$, and its sign $(-1)^\sigma$ equals $(-1)^{i+j-1}$, as one sees by counting $i+j-1$ crossings in
	\begin{equation*}
	\begin{gathered}[b]
	{\footnotesize
		\left(
		\begin{tikzcd}[column sep=0.4em, arrows={dash,thick,shorten=-2.2mm},baseline=-.7mm]
		0
		& 1
		& \cdots
		& i-1
		& i
		& i+1
		& \cdots
		& j-1
		& j
		& j+1
		& \cdots
		& k
		\\[-5.5ex]
		  \bullet   \ar[drr]
		& \bullet   \ar[drr]
		&
		& \bullet   \ar[drr]
		& \bullet   \ar[dllll]
		& \bullet   \ar[dr]
		& 
		& \bullet   \ar[dr]
		& \bullet   \ar[dlllllll]
		& \bullet   \ar[d]
		& 
		& \bullet   \ar[d]
		\\[-1.5ex]
		  \bullet
		& \bullet
		& \bullet
		& \bullet
		& 
		& \bullet
		& \bullet 
		& 
		& \bullet
		& \bullet
		& 
		& \bullet
		\\[-5.5ex]
		i 
		& j 
		& 0 
		& 1 
		& \cdots 
		& i-1 
		& i+1
		& \cdots 
		& j-1 
		& j+1 
		& \cdots 
		& k
		\end{tikzcd}
		\right)
		}
		\raisebox{-4ex}{.}
	\\\vspace{-4ex}
	\end{gathered}
	\qedhere
	\end{equation*}
\end{proof}

\Needspace*{5\baselineskip}
\begin{proof}[Proof of \textup{(\ref{Chevalley-Eilenberg})}.]
	Since $d$ is linear, and products $\omega = \theta_1\wedge\dots\wedge\theta_k$ of invariant $1$-forms span $\Omega^k(G)^G$, we can assume without loss of generality that $\omega$ is such a product. Then e.g.~\cite[pp.\,260, 262]{Bredon:1993} gives the formulas $\omega(Z_1,\dots,Z_k) = \smash{\det(\theta_i(Z_j))_{i,j=1}^k}$ and (graded Leibniz) $d\omega = \smash{\sum_{m=1}^k}(-1)^{m+1}\theta_1\wedge\dots\wedge d\theta_m\wedge\dots\wedge\theta_k$. As $2$-forms wedge-commute with everything (repeat the proof of \eqref{shuffle_lemma} with $\alpha$ and $\beta$ switched), this last product equals $d\theta_m\wedge\beta_m$ where $\beta_m := \theta_1\wedge\dots\wedge\smash{\widehat{\theta}_m}\wedge\dots\wedge\theta_k$. Thus we see that the left-hand side of \eqref{Chevalley-Eilenberg} equals
	\begin{align*}
		\sum_{m=1}^k&(-1)^{m+1}
		(d\theta_m\wedge\beta_m)(Z_0,\dots,Z_k)
		\\
		& = \sum_{m=1}^k(-1)^{m+1}
		\!\!\sum_{0\leqslant i<j\leqslant k}\!\!
		(-1)^{i+j-1}d\theta_m(Z_i,Z_j)
		\beta_m(Z_0,\twoldots,\widehat Z_i,\twoldots,\widehat Z_j,\twoldots,Z_k)
		\\
		& = \sum_{0\leqslant i< j\leqslant k}(-1)^{i+j}
		\sum_{m=1}^k(-1)^{m+1}\theta_m([Z_i, Z_j])
		\beta_m(Z_0,\twoldots,\widehat Z_i,\twoldots,\widehat Z_j,\twoldots,Z_k)
		\\
		& = \sum_{0\leqslant i<j\leqslant k}(-1)^{i+j}\det
		\left(\begin{smallmatrix}
		    \theta_1([Z_i, Z_j]) & \theta_1(Z_0) & {\textstyle\cdots} & 
			\widehat{\theta_1(Z_i)} & {\textstyle\cdots} &
			\widehat{\theta_1(Z_j)} & {\textstyle\cdots} &
			\theta_1(Z_k)
			\\[-2pt]
		    \vdots & \vdots & & \vdots & & \vdots & & \vdots
			\\[2pt]
		    \theta_k([Z_i, Z_j]) & \theta_k(Z_0) & {\textstyle\cdots} & 
			\widehat{\theta_k(Z_i)} & {\textstyle\cdots} &
			\widehat{\theta_k(Z_j)} & {\textstyle\cdots} &
			\theta_k(Z_k)
		\end{smallmatrix}\right)
		\\[1.5ex]
		& = \sum_{0\leqslant i<j\leqslant k} (-1)^{i+j}
		\omega([Z_i,Z_j],Z_0,\twoldots,\widehat Z_i,\twoldots,\widehat Z_j,\twoldots,Z_k)
	\end{align*}
	as claimed. Here the first equality is by \eqref{shuffle_lemma}; the second is by \eqref{at_e} which gives $d\theta(Z_0,Z_1) = -\theta([Z_0,Z_1])$ for any left-invariant $1$-form $\theta = \<\mu,\Theta_G(\,\cdot\,)\>$ ($\mu\in\LG^*$); and the third is cofactor expansion of the determinant along its first column.
\end{proof}

\begin{rema}
	The above argument, proving \eqref{Chevalley-Eilenberg} from its case $k=1$ and~the graded Leibniz property, can be reversed to show that $\dCE$, when \emph{defined} on $\BigWedge{}^\bullet\LG^*$ by \eqref{Chevalley-Eilenberg}, has the graded Leibniz property. This a key step that \cite{Chevalley:1948} left to the reader, in their algebraic proof that $\dCE^2=0$. (See their §14, p.\,105, case $p=1$.)
\end{rema}

\let\i\dotlessi
\let\l\polishl
\let\o\norwegiano
\let\u\russianbreve

\bibliographystyle{amsalpha}

\end{document}